\documentclass[final,reqno,a4paper]{amsart}

\usepackage{amsmath,amssymb,amsthm}
\usepackage[foot]{amsaddr}
\usepackage{graphicx}
\usepackage{cite}
\usepackage{enumerate}
\usepackage{nicefrac}
\usepackage{bm}
\usepackage{calrsfs}
\usepackage{a4wide}



%
\title[Norm-Preserving cG Time Stepping]{A Note On A Norm-Preserving\\Continuous Galerkin Time Stepping Scheme}
\author{Thomas P.~Wihler}
\address{Mathematisches Institut, Universit\"at Bern, Sidlerstrasse 5, CH-3012 Bern, Switzerland}
\email{wihler@math.unibe.ch}
\newcommand{\cG}{\text{\tiny\sf{cG}}}
\newcommand{\NX}[1]{\|#1\|_X}
\newcommand{\dd}{\mathsf{d}}
\newcommand{\A}{\mathcal{A}}
\newcommand{\ip}[1]{\left<#1\right>_X}
\newcommand{\uu}{\bm}
\renewcommand{\P}{\mathbb{P}}
\newcommand{\Pm}{\P^{r_m-1}(I_m;X)}
\newcommand{\Pii}{\P^{r_i-1}(I_i;X)}
\newcommand{\Pp}{\P^{r_m}(I_m;X)}

\newcommand{\CG}{\mathbb{V}^{\bm r}_{\cG}(\M)}

\newcommand{\cU}{U_{\cG}}
\newcommand{\cUp}{\dot{U}_{\cG}}
\newcommand{\dt}{\dd t}
\newcommand{\PG}{\Pi^{\bm r-1}}

\newcommand{\PGp}{\Pi^{r_m}}
\newcommand{\PGm}{\Pi^{r_m-1}}

\newcommand{\M}{\mathsf{M}}
\newcommand{\norm}[1]{\|#1\|_X}
\newcommand{\Norm}[1]{\|#1\|}
\renewcommand{\u}{u}
\newcommand{\up}{\dot{\u}}

\newcommand{\U}{\cU}
\newcommand{\Up}{\dot{U}_{\cG}}
\newcommand{\Uh}{\widehat{U}_{\cG}}
\newcommand{\Uhp}{\dot{\widehat{U}}_{\cG}}
\renewcommand{\S}{\mathsf{S}_{\|\u_0\|_X}}

\newcommand{\eh}{\widehat e_{\cG}}

\renewcommand{\L}{\mathcal L}
\newcommand{\F}{\mathcal F}

\newtheorem{Remark}{Remark}
\newenvironment{remark}{\begin{Remark}\rm}{\end{Remark}}
\newtheorem{theorem}{Theorem}

\newtheorem{proposition}{Proposition}

\subjclass[2010]{65L05, 65L60.}
\keywords{Continuous Galerkin time stepping, variable-order Galerkin schemes, geometric integration schemes, norm-preserving time marching methods, skew-symmetric dynamical systems.}

\begin{document}

\begin{abstract}
In this note we shall devise a variable-order continuous Galerkin time stepping method which is especially geared towards norm-preserving dynamical systems. In addition, we will provide an \emph{a posteriori} estimate for the $L^\infty$-error.
\end{abstract}

\maketitle

\section{Introduction}

Given a (final) time~$T>0$, and a real Hilbert space~$X$, with inner product~$\ip{\cdot,\cdot}$ and induced norm~$\NX{\cdot}$, the focus of this contribution is on continuous Galerkin approximations of dynamical systems,
\begin{align}
  \up(t) &= \F(t,u(t)), \quad t\in(0,T],\qquad \u(0)=\u_0,\label{eq:12}
\end{align}
where $\u:\,[0,T]\to X$ is an unknown solution, and~$u_0\in X$ is a prescribed initial value. We suppose that $\F:\,(0,T]\times X\to X$ is a possibly nonlinear operator which satisfies the \emph{orthogonality} property
\begin{equation}\label{eq:F1}\tag{F1}
\ip{\F(t,v),v}=0\qquad\forall v\in X,\,\forall t\in(0,T].
\end{equation}
We remark that~\eqref{eq:F1} includes the case of \emph{skew-symmetric linear} operators, i.e., $\F(t,v)=\A(t)v$, with~$\A(t)^\star=-\A(t)$, $v\in X$, $t\in(0,T]$. Henceforth, whenever clear from the context, we shall write~$\F(u)$ instead of~$\F(t,u(t))$, thereby suppressing the explicit dependence of~$\F$ on the time variable~$t$.

An important consequence of~\eqref{eq:F1} is the fact that, formally, solutions~$\u$ of~\eqref{eq:12} satisfy
\[
\frac12\frac{\dd}{\dt}\|\u(t)\|^2=\ip{\up(t),\u(t)}=\ip{\F(t,u(t)),\u(t)}=0,
\]
for any~$t\in(0,T]$. Especially, any solution~$\{u(t):\,0\le t\le T\}$ of~\eqref{eq:12} describes a trajectory on the sphere 
\begin{equation}\label{eq:S}
\S=\{v\in X:\, \NX{v}=\NX{\u_0}\},
\end{equation}
i.e., there holds
\begin{equation}\label{eq:0}
\NX{\u(t)}=\NX{\u_0}\qquad \forall t\in[0,T].
\end{equation}
Dynamical systems of this kind appear, for example, in quantum mechanics or optics in the context of the Bloch sphere; see, e.g., \cite{NielsenChuang:00}. 

The aim of this paper is to present and analyze a new variable-order  continuous Galerkin (cG) time stepping discretization scheme for the numerical approximation of the system~\eqref{eq:12} which \emph{preserves} the property~\eqref{eq:0} at the discrete time nodes (see Section~\ref{sc:cG}). In this sense, the proposed method can be seen as a \emph{geometric} integration scheme; see, e.g, ~\cite{HairerLubichWanner:06} for more details on this matter. Moreover, we will prove an \emph{a posteriori} bound for the temporal $L^\infty$-error (Section~\ref{sc:apost}). 

Galerkin time stepping methods for initial-value problems were introduced, for example, in~\cite{Hulme:72,Hulme:72a} (see also~\cite{EstepFrench:94}). These schemes can be seen as implicit one-step methods that are based on weak formulations. They generate piecewise polynomial approximations (of arbitrary degree) in time, and can, thus, be naturally cast into the framework of so-called $hp$-version finite element methods. This methodology, in turn, enables maximal flexibility in the choice of the local approximation degrees and time steps; see, e.g.,~\cite{Wihler:05,HolmWihler:15} for $hp$-version continuous Galerkin time stepping methods for ordinary differential equations, and~\cite{SchotzauWihler:10} for $hp$-cG time approximation schemes for linear parabolic partial differential equations. The remarkable advantage of the $hp$-approach (as compared to, for instance, low-order methods) is its ability to adapt to the local behavior of solutions in a very effective manner, and, thereby, to exhibit \emph{high algebraic} or even \emph{exponential rates of convergence}; see, e.g., \cite{BrunnerSchotzau:06,SchotzauSchwab:00,Wihler:05}.

Throughout this note Bochner spaces will be used: For an open interval $I=(a,b)\subset\mathbb{R}$, $a<b$, and a real Hilbert space~$X$ as before, the space $C^0(\overline{I};X)$ consists of all functions $u:\overline I\to X$ that are continuous on~$\overline{I}$ with values in $X$. Moreover, introducing, for~$1\le p\le\infty$, the norm
\[
\|u\|_{L^p(I;X)}=\begin{cases}
\displaystyle\left(\int_I\|u(t)\|^p_X\dd t\right)^{\nicefrac{1}{p}},&1\le p<\infty,\\[2ex]
\text{ess sup}_{t\in I}\|u(t)\|_X,&p=\infty,
\end{cases}
\]
we write $L^p(I;X)$ to signify the space of measurable functions $u:I\to X$ so that the corresponding norm is bounded. 


\section{$hp$-Continuous Galerkin Time Stepping}\label{sc:cG}

\subsection{Discrete Spaces}
Galerkin time discretization methods are based on a temporal partition $\mathsf M=\{I_m\}_{m=1}^M$ of the time interval $(0,T)$ into $M$ open subintervals $I_m=(t_{m-1}, t_m)$, $m=1,2,\ldots, M$, which are obtained from a set of time nodes~$0=t_0<t_1<t_2<\ldots <t_{M-1}<t_M=T$. We set $k_m=t_m-t_{m-1}$, and refer to $I_m$ as the $m^{\rm th}$ time step. To each time step $I_m$ we assign a polynomial degree $r_m\geq 1$ (taking the role of an approximation order), and store these numbers in a vector $\uu{r}=\{r_m\}_{m=1}^M$. In the sequel, for an integer $\ell$, we write $\uu{r}\pm\ell$ to denote the degree vector $\{r_m\pm\ell\}_{m=1}^M$. Additionally, we define by
\[
\P^r(I;X)=\bigg\{\,p:\,\overline I\to X\,\bigg|\, p(t)=\sum_{i=0}^r
x_i t^i,\ x_i\in X\,\bigg.\bigg\}
\]
the space of all polynomials of degree at most $r\in\mathbb{N}_0$ on an interval $\overline I\subset\mathbb{R}$, with coefficients in $X$. Moreover, for a given time partition~$\M$ and an associated degree vector~$\bm r$, we let
\[
\CG=\left\{u\in C^0([0,T];X)\,\bigg|\, u|_{I_m}\in\P^{r_m}(I_m;X)\,\bigg.\right\}
\]
be the global cG space on~$(0,T)$.

\subsection{A Norm-Preserving cG Scheme}

We define the \emph{continuous Galerkin  time stepping method} for the approximation of~\eqref{eq:12} iteratively  as follows: For a prescribed polynomial degree~$r_m\ge 1$, and a given initial value
\begin{equation}\label{eq:Um1}
\cU^{m-1}:=\cU(t_{m-1})\in X, 
\end{equation}
with~$\cU^0:=u_0$, where~$u_0\in X$ is the initial value from~\eqref{eq:12}, we find~$\cU|_{I_m}\in\mathbb{P}^{r_m}(I_m;X)$ through the weak formulation
\begin{equation}
\label{eq:cgfem} 
\int_{I_m}\ip{\cUp-\F(\PGm\cU),V}\dd t=0
\qquad \forall V\in\P^{r_m-1}(I_m;X),
\end{equation}
for any~$1\le m\le M$. 

We emphasize that, in contrast to the standard cG time marching scheme (see, e.g., \cite{Wihler:05}), the method~\eqref{eq:cgfem} contains a local $L^2$-projection~$\PGm:\,L^2(I_m;X)\to\Pm$ within the application of the operator~$\F$; it is defined by
\begin{equation}\label{eq:L2P}
W\mapsto\PGm W:\quad\int_{I_m}\ip{W-\PGm W,V}\dt=0\quad\forall V\in\Pm.
\end{equation}
Indeed, this ensures the norm-preserving property~\eqref{eq:0} at the time nodes (see Proposition~\ref{pr:0} below). Furthermore, notice that, in order to enforce the initial condition~\eqref{eq:Um1} on each individual time step, and, hence, the \emph{global continuity} of~$\cU$ on~$[0,T]$, the local trial space has one degree of freedom more than the local test space. 

Using the $L^2$-projection from~\eqref{eq:L2P}, we observe that~\eqref{eq:cgfem} can be written as
\[
\int_{I_m}\ip{\cUp-\PGm\F(\PGm\cU),V}\dd t=0
\qquad \forall V\in\P^{r_m-1}(I_m;X).
\]
Hence, since the cG solution~$\U$ from~\eqref{eq:cgfem} is globally continuous on~$[0,T]$, and because~$\Up|_{I_m}\in\P^{r_m-1}(I_m;X)$, $1\le m\le M$, the following strong formulation is satisfied:
\begin{equation}\label{eq:strong}
\begin{split}
\cUp&=\PG\F(\PG\cU)\quad\text{on }(0,T),\qquad
\cU(0)=\u_0,
\end{split}
\end{equation}
where~$\PG$ is the global $L^2$-projection defined by~$\PG|_{I_m}=\PGm$, $m=1,\ldots, M$. 

\subsubsection{Existence and Uniqueness of Solutions}
For~$m\in\{1,\ldots,M\}$, integrating the strong form \eqref{eq:strong} of the cG method from~$t_{m-1}$ to~$t\in I_m$, we obtain the fixed point equation
\begin{equation}\label{eq:fp}
\cU(t)=\cU^{m-1}+\int_{t_{m-1}}^t\PGm\F(\PGm\cU)\,\dd\tau,
\end{equation}
with~$\cU^{m-1}$ from~\eqref{eq:Um1}. Under the assumption
\begin{equation}\label{eq:Lm}\tag{F2}
L_m:=\sup_{t\in I_m}\sup_{\genfrac{}{}{0pt}{}{v,w\in X}{v\neq w}}\frac{\Norm{\F(v)-\F(w)}_X}{\Norm{v-w}_X}<\infty,
\end{equation}
and using a contraction argument, we will prove that a unique solution of~\eqref{eq:fp} in~$\P^{r_m}(I_m;X)$ exists; we note, however, that existence and uniqueness of (local) solutions can be established under far more local conditions; cf., e.g., \cite{HolmWihler:15}.

\begin{proposition}
Suppose that the time steps in the cG scheme~\eqref{eq:cgfem} are selected such that~$k_m<\nicefrac{\sqrt2}{L_m}$, where~$L_m$ is the constant from~\eqref{eq:Lm}. Then, there exists a unique solution of~\eqref{eq:Um1}--\eqref{eq:cgfem}, for any~$m=1,\ldots,M$ (and for any initial value~$u_0\in X$). In particular, the given bound on the local time steps~$k_m$ is independent of the polynomial degree vector~$\bm r$.
\end{proposition}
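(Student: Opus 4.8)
The plan is to recast the local problem \eqref{eq:Um1}--\eqref{eq:cgfem}, which was shown above to be equivalent to the fixed point equation \eqref{eq:fp}, in the form of a genuine fixed point problem and to invoke Banach's fixed point theorem on the trial space $\Pp$ equipped with the norm of $L^2(I_m;X)$. To this end I would introduce the map $\Phi:\Pp\to\Pp$ given by
\[
\Phi(W)(t):=\cU^{m-1}+\int_{t_{m-1}}^t\PGm\F(\PGm W)\,\dd\tau,\qquad t\in I_m,
\]
so that fixed points of $\Phi$ are exactly the solutions of \eqref{eq:fp} (note that $\Phi(W)(t_{m-1})=\cU^{m-1}$, whereby the initial condition \eqref{eq:Um1} is built in). The first point to check is that $\Phi$ is well defined and maps $\Pp$ into itself: since $\PGm$ projects onto $\Pm$, the integrand $\PGm\F(\PGm W)$ is a polynomial of degree at most $r_m-1$, and integrating in time raises its degree by exactly one, so $\Phi(W)\in\Pp$; the Lipschitz bound \eqref{eq:Lm} guarantees that $\F(\PGm W)\in L^2(I_m;X)$ so that the inner projection is meaningful. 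Moreover, $\Pp\cong X^{r_m+1}$ is a closed subspace of the Hilbert space $L^2(I_m;X)$ and hence complete, so the fixed point theorem is applicable once contractivity is established.

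The heart of the argument is to show that $\Phi$ is a contraction with constant $\nicefrac{k_mL_m}{\sqrt2}$. Given $W_1,W_2\in\Pp$, I would set $g:=\PGm[\F(\PGm W_1)-\F(\PGm W_2)]\in\Pm$, so that $\Phi(W_1)(t)-\Phi(W_2)(t)=\int_{t_{m-1}}^t g\,\dd\tau$. The Cauchy--Schwarz inequality then yields the pointwise bound $\norm{\Phi(W_1)(t)-\Phi(W_2)(t)}\le\sqrt{t-t_{m-1}}\,\|g\|_{L^2(I_m;X)}$; squaring, integrating over $I_m$, and using $\int_{I_m}(t-t_{m-1})\,\dt=\nicefrac{k_m^2}{2}$ gives
\[
\|\Phi(W_1)-\Phi(W_2)\|_{L^2(I_m;X)}\le\frac{k_m}{\sqrt2}\,\|g\|_{L^2(I_m;X)}.
\]
It remains to estimate $\|g\|_{L^2(I_m;X)}$, and here I would use twice that the $L^2$-projection $\PGm$, being an orthogonal projection, is a contraction in $L^2(I_m;X)$, together with the global Lipschitz property \eqref{eq:Lm} of $\F$. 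Applying these in turn yields $\|g\|_{L^2(I_m;X)}\le\|\F(\PGm W_1)-\F(\PGm W_2)\|_{L^2(I_m;X)}\le L_m\|\PGm(W_1-W_2)\|_{L^2(I_m;X)}\le L_m\|W_1-W_2\|_{L^2(I_m;X)}$, whence $\Phi$ is Lipschitz with the claimed constant $\nicefrac{k_mL_m}{\sqrt2}$.

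Under the stated step-size restriction $k_m<\nicefrac{\sqrt2}{L_m}$ this constant is strictly below one, so $\Phi$ is a contraction and Banach's theorem provides a unique fixed point in $\Pp$, i.e.\ a unique local solution. Since the contraction constant involves neither $r_m$ nor $\dim X$, the bound on $k_m$ is automatically independent of the polynomial degree vector $\bm r$, as asserted. I expect the only genuinely delicate point to be the sharp constant $\sqrt2$, which hinges on combining the Cauchy--Schwarz estimate for the time integral (measured in the $L^2(I_m;X)$-norm rather than, say, $L^\infty$) with the twofold $L^2$-stability of $\PGm$; a secondary detail worth spelling out is the completeness of $\Pp$ in the $L^2(I_m;X)$-norm. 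Everything else is routine.
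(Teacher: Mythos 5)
Your proposal is correct and follows essentially the same route as the paper's proof: the same fixed-point operator on $\Pp$, the same pointwise Cauchy--Schwarz estimate followed by integration over $I_m$ (yielding the factor $\nicefrac{k_m^2}{2}$), the twofold $L^2$-stability of $\PGm$, and the Lipschitz bound~\eqref{eq:Lm}, leading to the identical contraction condition $L_m^2k_m^2<2$. Your additional remarks on well-definedness of the map into $\Pp$ and on completeness of $\Pp$ in the $L^2(I_m;X)$-norm are correct details that the paper leaves implicit.
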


\begin{proof}
For~$1\le m\le M$, defining the operator~$\mathcal{T}_m:\,\mathbb{P}^{r_m}(I_m;X)\to \mathbb{P}^{r_m}(I_m;X)$ by
\[
\mathcal{T}_m(V):=\cU^{m-1}+\int_{t_{m-1}}^t\PGm\F(\PGm V)\,\dd\tau,\qquad V\in\mathbb{P}^{r_m}(I_m;X),
\]
we have
\begin{align*}
\Norm{\mathcal{T}_m(V)-\mathcal{T}_m(W)}_X
&\le \left\|\int_{t_{m-1}}^t\PGm(\F(\PGm V)-\F(\PGm W))\,\dd\tau\right\|_X\\
&\le \int_{t_{m-1}}^{t}\Norm{\PGm(\F(\PGm V)-\F(\PGm W))}_X\,\dd\tau,
\end{align*}
for any~$V,W\in\mathbb{P}^{r_m}(I_m;X)$, and for all~$t\in I_m$. Employing the Cauchy-Schwarz inequality as well as the stability of~$\PGm$ (with constant~1), and making use of~\eqref{eq:Lm}, leads to
\begin{align*}
\Norm{\mathcal{T}_m(V)-\mathcal{T}_m(W)}_X^2
&\le (t-t_{m-1})\Norm{\PGm(\F(\PGm V)-\F(\PGm W))}_{L^2(I_m;X)}^2\\
&\le (t-t_{m-1})\Norm{\F(\PGm V)-\F(\PGm W)}_{L^2(I_m;X)}^2\\
&\le L_m^2(t-t_{m-1})\Norm{\PGm(V-W)}_{L^2(I_m;X)}^2\\
&\le L_m^2(t-t_{m-1})\Norm{V-W}_{L^2(I_m;X)}^2.
\end{align*}
Integrating this inequality over~$I_m$, we see that~$\mathcal{T}_m$ is a contraction provided that~$L_m^2k_m^2<2$. Thus, making use of Banach's fixed point theorem, we see that~$\mathcal{T}_m$ has a unique fixed point in~$\Pp$.
\end{proof}

\subsubsection{Nodal Norm Exactness}

The non-standard appearance of the $L^2$-projection operator~$\PGm$ in~\eqref{eq:cgfem} leads to the preservation of~\eqref{eq:0} at each time node~$t_m$, $m=0,1,2,\ldots, M$. 

\begin{proposition}\label{pr:0}
If~$\F$ satisfies the structural assumption~\eqref{eq:F1}, then any solution~$\cU$ of the cG scheme~\eqref{eq:cgfem} satisfies~$\{\cU(t_{m})\}_{m=0}^{M}\subset\S$, where~$\S$ is the sphere from~\eqref{eq:S}.
\end{proposition}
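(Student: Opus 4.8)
The plan is to argue by induction over the time steps, reducing the claim to a single local identity on each interval~$I_m$. Since~$\cU(t_0)=\cU(0)=u_0$, we trivially have~$\cU(t_0)\in\S$. For the inductive step I would assume~$\NX{\cU(t_{m-1})}=\NX{u_0}$ and aim to show the \emph{nodal norm invariance}~$\NX{\cU(t_m)}=\NX{\cU(t_{m-1})}$; chaining these equalities then yields~$\NX{\cU(t_m)}=\NX{u_0}$ for every~$m$, which is exactly the assertion~$\{\cU(t_m)\}_{m=0}^{M}\subset\S$.

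To relate the two nodal values I would exploit that~$\cU|_{I_m}$ is a polynomial, hence smooth, so the fundamental theorem of calculus gives
\[
\NX{\cU(t_m)}^2-\NX{\cU(t_{m-1})}^2=\int_{I_m}\frac{\dd}{\dt}\NX{\cU(t)}^2\,\dt=2\int_{I_m}\ip{\cUp,\cU}\,\dt .
\]
Everything then reduces to verifying that~$\int_{I_m}\ip{\cUp,\cU}\,\dt=0$.

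The crux is the following short computation. Because~$\cUp|_{I_m}\in\Pm$ and, by the defining orthogonality~\eqref{eq:L2P} of~$\PGm$, the residual~$\cU-\PGm\cU$ is~$L^2(I_m;X)$-orthogonal to~$\Pm$, I may replace~$\cU$ by its projection inside the inner product:
\[
\int_{I_m}\ip{\cUp,\cU}\,\dt=\int_{I_m}\ip{\cUp,\PGm\cU}\,\dt .
\]
Now~$\PGm\cU\in\Pm$ is an admissible test function in the weak formulation~\eqref{eq:cgfem}, so testing with~$V=\PGm\cU$ turns the right-hand side into~$\int_{I_m}\ip{\F(\PGm\cU),\PGm\cU}\,\dt$. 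Finally, since~$\PGm\cU(t)\in X$ for each fixed~$t$, the orthogonality hypothesis~\eqref{eq:F1} forces the integrand to vanish pointwise, whence the integral is zero.

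The step I expect to be decisive, rather than merely technical, is the choice of test function~$V=\PGm\cU$ together with the projection-residual orthogonality. This is precisely where the non-standard placement of~$\PGm$ inside~$\F$ in~\eqref{eq:cgfem} is essential: it makes the argument of~$\F$ coincide with the test function, so that~\eqref{eq:F1} can be invoked pointwise. For the standard cG scheme, without the inner projection, one would instead be left with~$\ip{\F(\cU),\cU}$ paired against a projected test function, and the mismatch between the argument of~$\F$ and the remaining factor would obstruct the cancellation; discrete norm preservation would then generally fail.
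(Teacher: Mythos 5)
Your proof is correct and follows essentially the same route as the paper's: the decisive ingredients---exchanging $\cU$ for $\PGm\cU$ via the $L^2$-projection orthogonality against $\cUp|_{I_m}\in\Pm$, testing with the projected solution, and invoking~\eqref{eq:F1} pointwise---are exactly those used in the paper. The only (cosmetic) difference is that you organize the argument locally per time step with an induction, whereas the paper integrates the strong form once from $0$ to $t_m$ and telescopes directly.
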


\begin{proof}
For~$m=1,\ldots,M$, multiplying the strong form~\eqref{eq:strong} by~$\PG\cU$, and integrating from~$t=0$ to~$t=t_m$, we recall~\eqref{eq:F1} to infer that
\begin{align*}
0
&=\int_{0}^{t_m}\ip{\cUp-\PG\F(\PG\cU),\PG\cU}\dd t\\
&=\int_{0}^{t_m}\ip{\cUp-\F(\PG\cU),\PG\cU}\dd t\\
&=\int_{0}^{t_m}\ip{\cUp,\PG\cU}\dd t.
\end{align*}
Since~$\cUp|_{I_i}\in\Pii$, $i=1,\ldots,m$, we arrive at
\begin{align*}
0
=\int_{0}^{t_m}\ip{\cUp,\cU}\dd t
=\frac12\int_{0}^{t_m}\frac{\dd}{\dt}\norm{\cU(t)}^2=\frac12\norm{\cU(t_m)}^2-\frac12\norm{\u_0}^2.
\end{align*}
Thus, we conclude~$\norm{\cU(t_m)}=\norm{\u_0}$, for any~$m=1,\ldots, M$.
\end{proof}


\section{\emph{A Posteriori} Error Analysis}\label{sc:apost}

We will now derive an \emph{a posteriori} error estimate for the cG method~\eqref{eq:Um1}--\eqref{eq:cgfem}, and provide a few numerical experiments. To this end, we suppose that, in addition to~\eqref{eq:F1}, the \emph{lower adjoint} property
\begin{equation}\label{eq:F2}\tag{F3}
\ip{\F(v),w}\ge -\ip{v,\F(w)}\qquad\forall v,w\in X
\end{equation}
holds true; again, as for~\eqref{eq:F1}, this includes the case of linear, skew-symmetric operators. Combining~\eqref{eq:F1} and~\eqref{eq:F2}, it follows that
\begin{equation}\label{eq:F3}
\ip{\F(v)-\F(w),v-w}=-\ip{\F(v),w}-\ip{v,\F(w)}\le 0,
\end{equation}
for any~$v,w\in X$.

\subsection{\emph{A Posteriori} Error Bound}
Following the approach proposed in~\cite{AkrivisMakridakisNochetto:09}, we define a reconstruction~$\Uh:\,[0,T]\to X$ of the cG solution~$\U$ from~\eqref{eq:Um1}--\eqref{eq:cgfem} based on raising each of the local $L^2$-projections appearing in~\eqref{eq:fp} by one degree, i.e., $\PGm\F(\PGm\U)\mapsto\PGp\F(\PGp\U)\equiv\PGp\F(\U)$. More precisely, for any~$m=1,\ldots,M$, we let
\begin{equation}\label{eq:rec}
\Uh|_{\overline I_m}(t):=\U^{m-1}+\int_{t_{m-1}}^t\Pi^{r_m}\F(\U)\,\dd\tau+\mathfrak{U}_{\cG}^{m-1}\int_{t_{m-1}}^t\L^m(\tau)\,\dd\tau,\qquad t\in\overline I_m.
\end{equation}
Here, noticing~\eqref{eq:strong} as well as the fact that
\[
\int_{I_m}\PGm\F(\U)\,\dd\tau=\int_{I_m}\PGp\F(\U)\,\dd\tau,
\]
we define
\begin{align*}
\mathfrak{U}_{\cG}^{m-1}:&=\int_{I_m}\PGm(\F(\PGm\U)-\F(\U))\,\dd t
=\int_{I_m}(\Up-\PGp\F(\U))\,\dd t\\
&=\U^{m}-\U^{m-1}-\int_{I_m}\PGp\F(\U)\,\dd t\in X
\end{align*}
in order to incorporate a discrete $\delta$-distribution $\L^m:\,\overline I_m\to\P^{r_m}(I_m;\mathbb{R})$ into~\eqref{eq:rec}; it is given in weak form by
\[
t\mapsto\L^m(t):\qquad\int_{I_m}\L^m(t)q(t)\dt=q(t_{m-1})\quad\forall q\in\P^{r_m}(I_m;\mathbb{R});
\]
see~\cite{SchotzauWihler:10} for details. As in the analysis of discontinuous Galerkin time stepping methods, the appearance of~$\L^m$ in~\eqref{eq:rec} ensures that~$\Uh(t_{m-1})=\U^{m-1}$ and~$\Uh(t_{m})=\U^m$, and, hence, that~$\Uh\in C^0([0,T];X)$. Then, we define the error~$\eh:=u-\Uh$, and assume that the exact solution~$u$ of~\eqref{eq:12} satisfies
\begin{align}\label{eq:reg}
\up&\in L^1((0,T);X);
\end{align}
this, in turn, implies that~$u\in C^0([0,T];X)$ (cf.~\cite[Proof of Lemma~7.1]{Roubicek:05}). Thus, we have
\begin{align*}
\frac12\frac{\dd}{\dt}\norm{\eh(t)}^2
=\ip{\up(t)-\Uhp(t),\eh},
\end{align*}
for any~$t\in I_m$. Therefore, applying~\eqref{eq:F3}, we obtain
\begin{align*}
\frac{\dd}{\dt}\norm{\eh(t)}^2
&=2\ip{\F(t,\u(t))-\Uhp(t),\eh(t)}\\
&=2\ip{\F(t,\u(t))-\F(t,\Uh(t)),\eh(t)}+2\ip{\F(t,\Uh(t))-\Uhp(t),\eh(t)}\\
&\le2\ip{\F(t,\Uh(t))-\Uhp(t),\eh(t)}.
\end{align*}
Furthermore, employing the Cauchy-Schwarz inequality, yields
\[
\frac{\dd}{\dt}\norm{\eh(t)}^2
\le 2\|\F(t,\Uh(t))-\Uhp(t)\|_X\|\eh(t)\|_X.
\]
Noticing that~$\eh(0)=0$, and integrating the above differential inequality, we infer that
\[
\norm{\eh(t)}\le \int_{0}^t\|\F(\Uh)-\Uhp\|_X\dd\tau,\qquad t\in[0,T].
\]
This leads to the bound
\begin{align*}
\|\eh\|_{L^\infty((0,t);X)}
\le \Norm{\F(\Uh)-\Uhp}_{L^1((0,t);X)},\qquad t\in[0,T].
\end{align*}
Hence, applying the triangle inequality, we have proved the ensuing \emph{a posteriori} error bound.

\begin{theorem}\label{thm:main}
Suppose that the operator~$\F$ satisfies~\eqref{eq:F1} and~\eqref{eq:F2}, and that the exact solution~$u$ of~\eqref{eq:12} fulfills~\eqref{eq:reg}. Then, if~$\cU\in\CG$ solves the cG scheme~\eqref{eq:Um1}--\eqref{eq:cgfem}, the \emph{a posteriori} error bound
\begin{equation}\label{eq:main}
\begin{split}
\|\u-\U\|_{L^\infty((0,t);X)}
&\le \Norm{\F(\Uh)-\Uhp}_{L^1((0,t);X)}
+\|\U-\Uh\|_{L^\infty((0,t);X)},
\end{split}
\end{equation}
holds true for any~$t\in[0,T]$. Here, $\Uh$ is the reconstruction from~\eqref{eq:rec}.
\end{theorem}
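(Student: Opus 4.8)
The plan is to estimate not the genuine error $\u-\U$ directly, but rather the reconstruction error $\eh:=\u-\Uh$, recovering the claim at the end by a triangle inequality. This detour is dictated by two features built into~\eqref{eq:rec}: the reconstruction satisfies $\Uh(0)=\U^0=\u_0$, so that $\eh(0)=0$, and the combined hypotheses~\eqref{eq:F1}--\eqref{eq:F2} furnish the sign condition~\eqref{eq:F3}. The regularity assumption~\eqref{eq:reg} ensures $\u\in C^0([0,T];X)$, so that $t\mapsto\norm{\eh(t)}^2$ is absolutely continuous and the ensuing energy computation is justified.

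The core of the argument is an energy identity for $\eh$. Differentiating, one has $\tfrac12\tfrac{\dd}{\dt}\norm{\eh(t)}^2=\ip{\up-\Uhp,\eh}$; substituting $\up=\F(\u)$ from~\eqref{eq:12} and inserting $\pm\F(\Uh)$ splits the right-hand side into $\ip{\F(\u)-\F(\Uh),\eh}+\ip{\F(\Uh)-\Uhp,\eh}$. The decisive point is that, since $\eh=\u-\Uh$, the first term has exactly the form governed by~\eqref{eq:F3} and is therefore nonpositive; this is precisely where the skew-type structure is exploited, allowing the contribution involving the unknown solution $\u$ to be discarded. What survives is the computable residual $\F(\Uh)-\Uhp$, which quantifies the failure of the reconstruction to satisfy~\eqref{eq:12}.

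It then remains to unwind the resulting differential inequality. The Cauchy--Schwarz inequality applied to the residual term gives $\tfrac{\dd}{\dt}\norm{\eh}^2\le2\norm{\F(\Uh)-\Uhp}\,\norm{\eh}$, which I would reduce to the first-order form $\tfrac{\dd}{\dt}\norm{\eh}\le\norm{\F(\Uh)-\Uhp}$, integrate from $0$ to $t$ using $\eh(0)=0$, and pass to the essential supremum so as to obtain $\Norm{\eh}_{L^\infty((0,t);X)}\le\Norm{\F(\Uh)-\Uhp}_{L^1((0,t);X)}$. A final triangle inequality, $\Norm{\u-\U}_{L^\infty((0,t);X)}\le\Norm{\eh}_{L^\infty((0,t);X)}+\Norm{\U-\Uh}_{L^\infty((0,t);X)}$, then yields~\eqref{eq:main}. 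The one delicate step, and the natural place for such an argument to stumble, is the division by $\norm{\eh}$ in passing to the first-order inequality: this is immediate where $\norm{\eh}>0$, but on the zero set it requires a standard regularization, replacing $\norm{\eh}$ by $\sqrt{\norm{\eh}^2+\varepsilon^2}$ and letting $\varepsilon\downarrow0$, which simultaneously legitimizes integrating the inequality under the mere $L^1$-regularity~\eqref{eq:reg}. The remaining manipulations are routine.
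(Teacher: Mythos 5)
Your proposal is correct and follows essentially the same route as the paper: estimate $\eh=\u-\Uh$ via the energy identity, split with $\pm\F(\Uh)$ and discard the nonpositive term using~\eqref{eq:F3}, apply Cauchy--Schwarz, integrate the differential inequality from $\eh(0)=0$, and finish with the triangle inequality. Your extra remark on regularizing $\norm{\eh}$ before dividing is a careful touch the paper leaves implicit, but it does not change the argument.
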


\begin{remark}
The above derivations would actually work without applying a reconstruction technique, i.e., for~$\Uh=\U$. Then, however, the resulting \emph{a posteriori} error estimate~\eqref{eq:main} would be sub-optimal; cf.~\cite{AkrivisMakridakisNochetto:09}.
\end{remark}


\subsection{Numerical Example}

In order to test the \emph{a posteriori} error estimate from Theorem~\ref{thm:main}, we consider the linear skew-symmetric $3\times 3$-system
\[
\dot{\bm u}(t)=
\begin{pmatrix}
0 & -2t\cos(3e^{-t}) & -2t\sin(3e^{-t})\\
2t\cos(3e^{-t}) & 0 & 3e^{-t}\\
2t\sin(3e^{-t}) & -3e^{-t} & 0
\end{pmatrix}\bm u(t),\qquad t>0,
\]
with the initial condition~$\bm u(0)=(1,0,0)^\top$. Its exact solution is smooth, and can be represented explicitly by
\[
\bm u:\,[0,\infty)\to\mathbb{R}^3,\qquad \bm u(t)=\begin{pmatrix}
\cos(t^2),
\sin(t^2)\cos(3e^{-t}),
\sin(t^2)\sin(3e^{-t})
\end{pmatrix}^\top.
\]

We perform two different sets of computational experiments: Firstly, for~$T=4$, we study the $L^\infty((0,T);\mathbb{R}^3)$-error of the cG method for fixed polynomial degrees~$r=r_m=\{1,2,3,4\}$ on all time steps~$m\ge 1$. Here, we employ a uniform time partition with time step lengths~$k=k_m\to 0$. In the context of the finite element method (FEM), this is referred to as an $h$-version approach. From Figure~\ref{fig:hFEM}, we clearly see that the cG method converges of order~$\mathcal{O}(k^{r+1})$ in the~$L^\infty$-norm, which is line with the results on the standard cG method from~\cite{EstepFrench:94}. In addition, we observe that the \emph{a posteriori} error estimator from Theorem~\ref{thm:main}, i.e., the right-hand side of~\eqref{eq:main} exhibits the same rate.

Secondly, we apply a spectral ($p$-version in the context of FEM) approach. Here, again for~$T=4$, we use a fixed time step size~$k=k_m=1$ on all elements, and consider the performance of the $L^\infty((0,T);\mathbb{R}^3)$-error as well as of the \emph{a posteriori} error estimator from~\eqref{eq:main} for increasing polynomial degrees~$r=r_m\in\{1,\ldots,16\}$ (again, the same polynomial degree on all time steps is employed). In Figure~\ref{fig:pFEM}, these quantities are plotted in a semi-log coordinate system. The nearly straight lines indicate that the method is able to achieve exponential rates of convergence as the polynomial degree is increased, and that the same behavior is observed for the error estimator.

\begin{figure}
\includegraphics[width=0.475\linewidth]{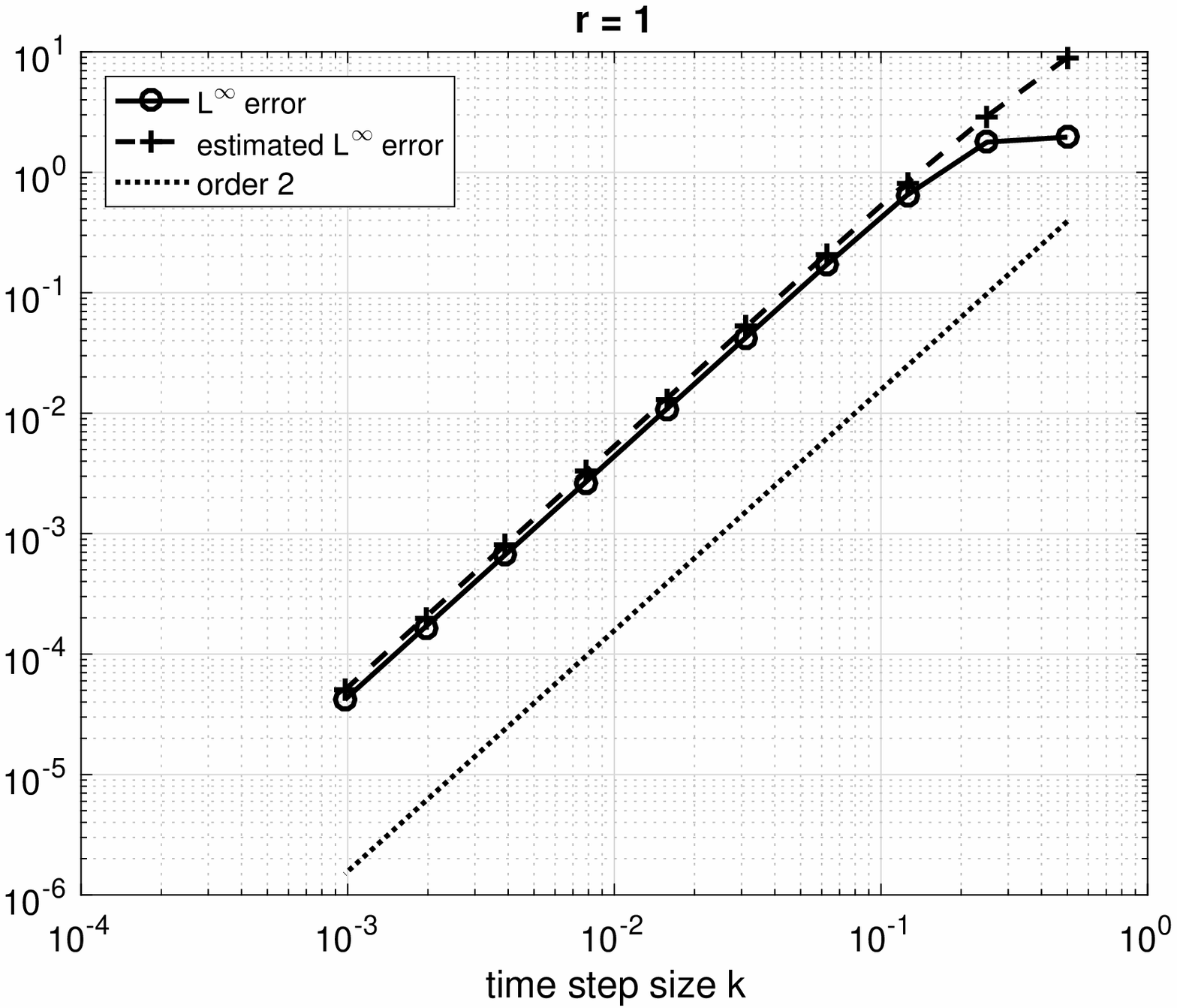}\hfill
\includegraphics[width=0.475\linewidth]{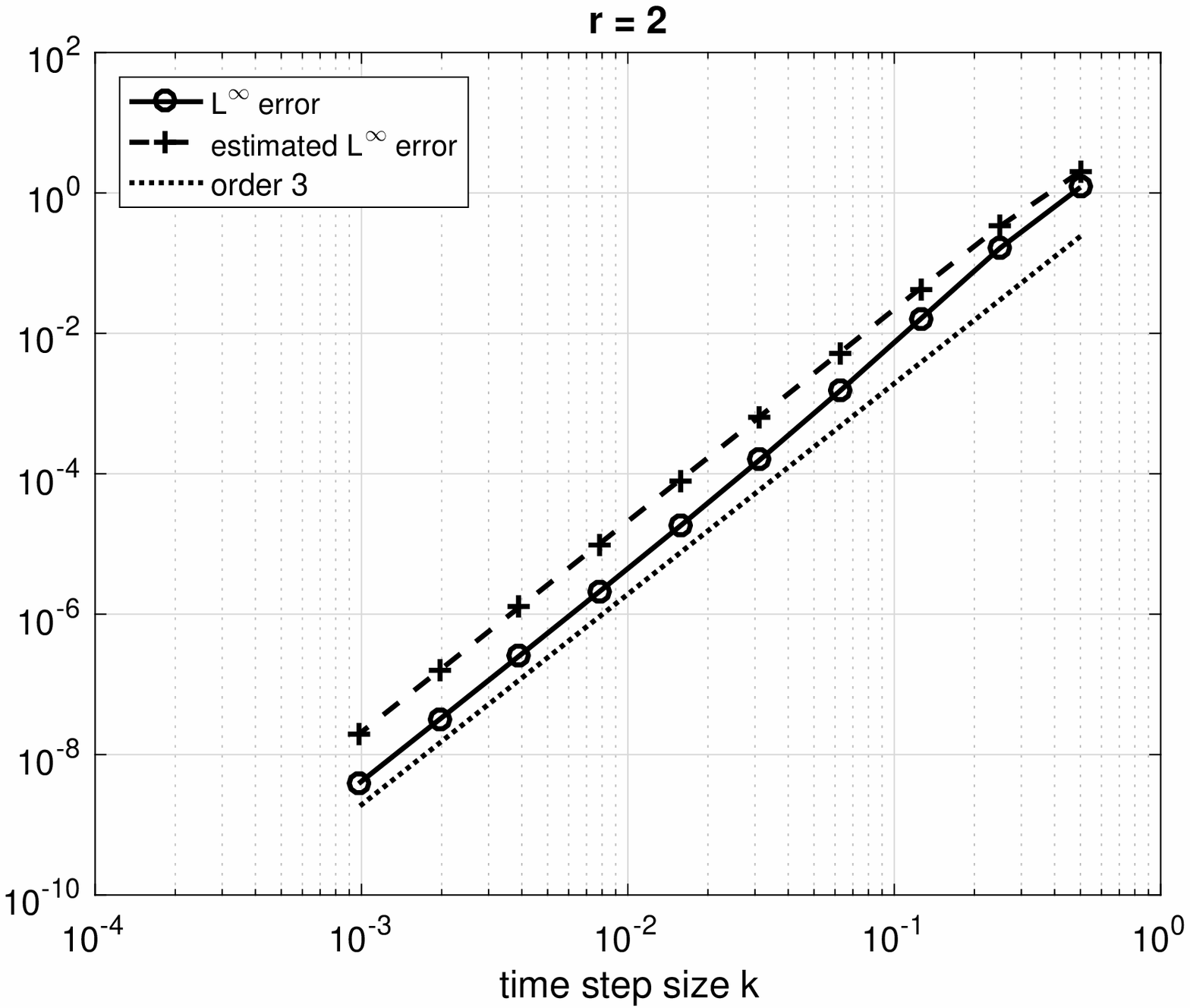}\\[2ex]
\includegraphics[width=0.475\linewidth]{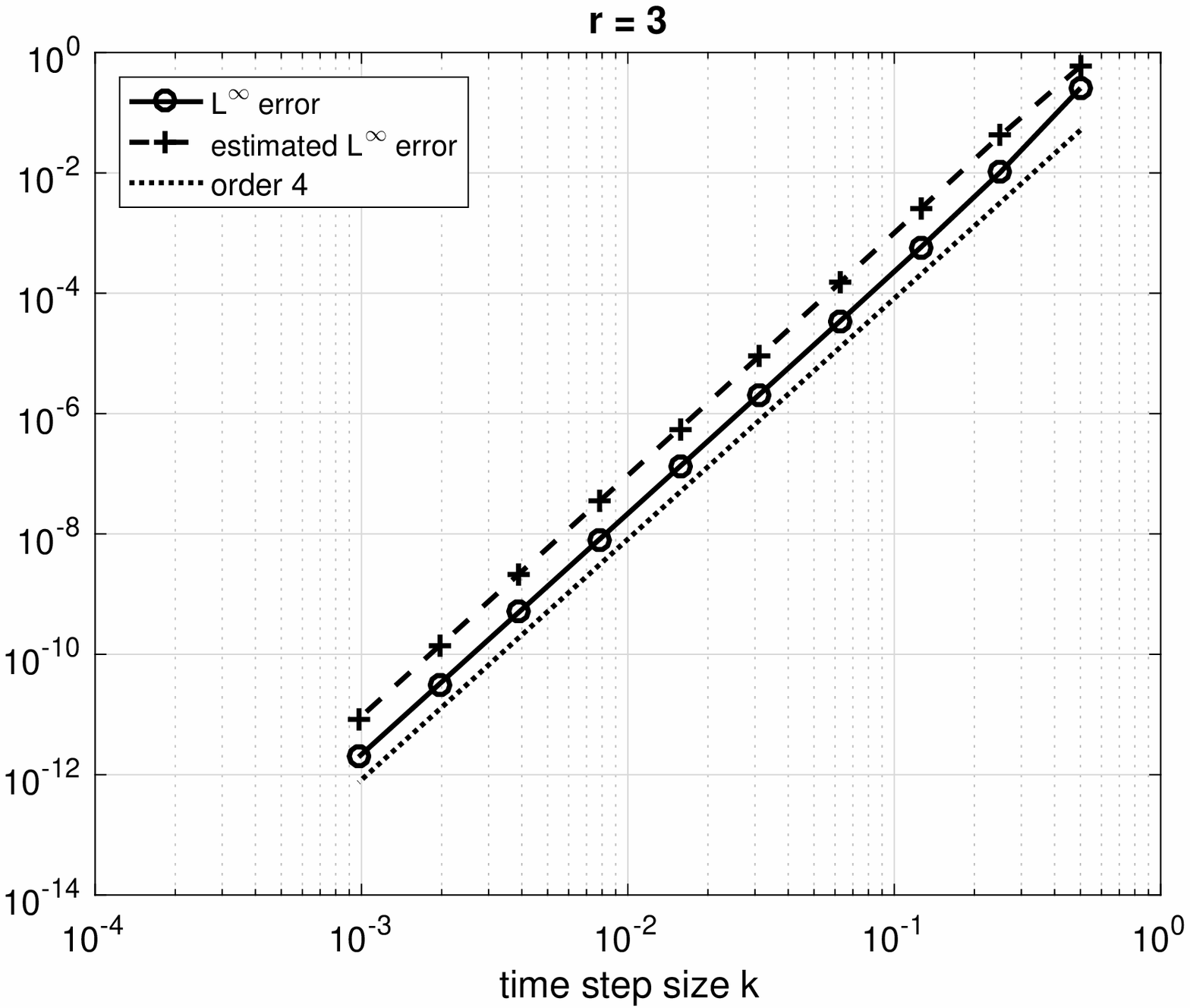}\hfill
\includegraphics[width=0.475\linewidth]{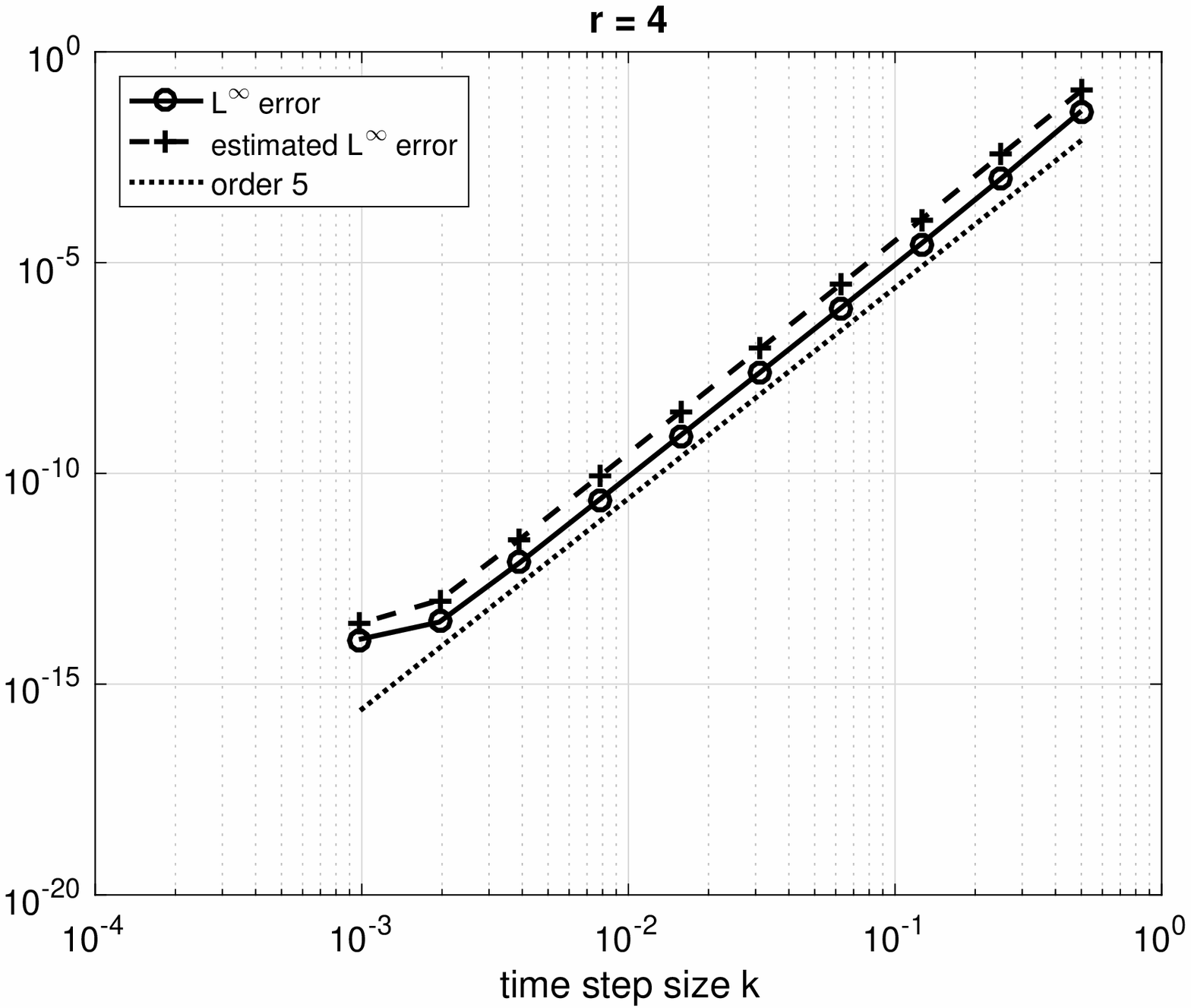}
\caption{Performance data for decreasing time steps with~$r\in\{1,2,3,4\}$.}
\label{fig:hFEM}
\end{figure}

\begin{figure}
\includegraphics[width=0.475\linewidth]{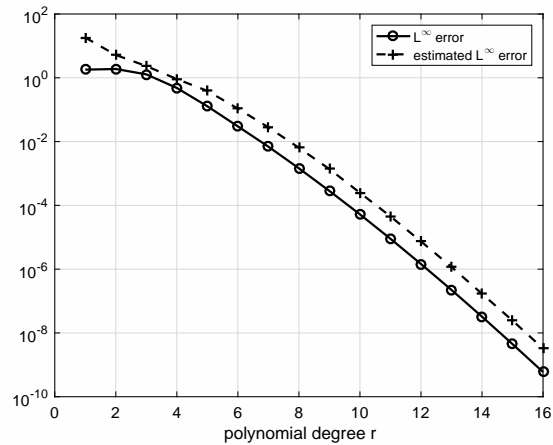}
\caption{Performance data for increasing polynomial degrees and fixed time step size~1.}
\label{fig:pFEM}
\end{figure}

\section{Conclusions}

In this note, we have presented a continuous Galerkin time stepping method for norm-preserving initial value problems. Moreover, we have derived an \emph{a posteriori} error estimate for the $L^\infty$-norm. For a smooth solution, our numerical experiments demonstrate that the estimator is of optimal algebraic order as the time steps tend to zero, and, in addition, exhibits exponential convergence for increasing polynomial degrees. Finally, we remark that the error estimator could be employed in the design of $h$- and $hp$-type adaptive time step procedures; cf., e.g., \cite{EstepFrench:94} and \cite{SchotzauWihler:10}, respectively.


\bibliographystyle{plain}
\bibliography{myrefs}
\end{document}